\def\comment#1{{
\global\advance \sracnum by 1 ${}^{[\the\sracnum]}$
\marginpar{%
\vskip-4mm{\tiny~~(\the\sracnum)}~~
 {\footnotesize \baselineskip=10pt \raggedright #1\\ ~\\~}}}}
\newtheorem{thm}{Theorem}
\newtheorem{conjecture}[thm]{Conjecture}
\newtheorem{prop}[thm]{Proposition}
\newtheorem{lem}[thm]{Lemma}
\theoremstyle{definition}
\newtheorem{claim}[thm]{Claim}
\newtheorem*{defin*}{Definition}
\def\text#1{\quad\textnormal{#1}\quad}
\def\Text#1{\qquad\textnormal{#1}\qquad}
\def\e{\mathbb{E}}
\newcommand{\OO}[1]{\overline{#1}}
\newcommand{\floor}[1]{\left\lfloor #1 \right\rfloor}
\def\M{\mathcal{M}}
\def\L{\mathcal{L}}
\def\N{\mathbb{N}}
\def\s{\sigma}
\def\HH{\mathcal{H}}
\def\F{\mathcal{F}}
\def\FS{\F^{\sigma}}
\def\P{P}
\def\eps{\varepsilon}
\def\im{\textnormal{Im}}
\def\Remark{{\noindent \textbf{Remark:}} }
\def\beq{\begin{equation}}
\def\eeq{\end{equation}}
\def\bth{\begin{thm}}
\def\eth{\end{thm}}
\def\blm{\begin{lem}}
\def\elm{\end{lem}}
\def\bc{\begin{corollary}}
\def\ec{\end{corollary}}
\def\bcj{\begin{conjecture}}
\def\ecj{\end{conjecture}}
\def\bpf{\begin{proof}}
\def\epf{\end{proof}}
\def\bpp{\begin{prop}}
\def\epp{\end{prop}}
\def\pg{representation graph}
\newcommand{\oururl}{\url{http://www.math.uiuc.edu/~jobal/cikk/permutations/}}
\DeclareMathOperator{\ourmod}{mod}
\title{Minimum number of  monotone subsequences of length 4 in permutations}
\author{
 J\'{o}zsef Balogh\thanks{  Department of Mathematics, University of Illinois, Urbana, IL 61801, USA and Bolyai Institute, University of Szeged, Szeged, Hungary {\tt jobal@math.uiuc.edu}.
    Research is partially supported by Simons Fellowship, NSF CAREER Grant DMS-0745185, Arnold O. Beckman Research Award (UIUC Campus Research Board 13039) and Marie Curie FP7-PEOPLE-2012-IIF 327763.}
 \and Ping Hu\thanks{Department of Mathematics, University of Illinois, Urbana, IL 61801, USA, {\tt pinghu1@math.uiuc.edu}.}
 \and Bernard Lidick\'{y}\thanks{Department of Mathematics, University of Illinois, Urbana, IL 61801, USA, {\tt lidicky@illinois.edu} and Charles University in Prague. Research is partially supported by NSF grant DMS-1266016.}
 \and Oleg Pikhurko\thanks{Mathematics Institute and DIMAP, University of Warwick, Coventry CV4 7AL, UK, {\tt O.Pikhurko@warwick.ac.uk}. Research is partially supported by ERC grant 306493 and EPSRC grant EP/K012045/1.}
 \and Bal\'{a}zs Udvari\thanks{
Bolyai Institute, University of Szeged, Szeged, Hungary. {\tt udvarib@math.u-szeged.hu}.
Research was supported by the European Union and the State of Hungary, 
co-financed by the European Social Fund in the framework of T\'{A}MOP 4.2.4. 
A/2-11-1-2012-0001 National Excellence Program.
 }
 \and Jan Volec\thanks{
 Mathematics Institute and DIMAP, University of Warwick, Coventry CV4 7AL, UK, {\tt honza@ucw.cz}.
  Research is partially supported by the European Research Council under the European
  Union's Seventh Framework Programme (FP7/2007-2013)/ERC grant agreement no.~259385.
 }
}
\begin{document}
\maketitle

\begin{abstract}
{
We show that for every sufficiently large $n$, the number of monotone subsequences
of length four in a permutation on $n$ points is at least
$\binom{\floor{n/3}}{4} + \binom{\floor{(n+1)/3}}{4} + \binom{\floor{(n+2)/3}}{4}$.
Furthermore, we characterize all permutations on $[n]$ that attain this lower bound.
The proof uses the flag algebra framework together with some additional stability arguments. 
This problem is equivalent to some specific type of edge colorings of complete graphs with two
colors, where the number of monochromatic $K_4$'s is minimized.
We show that all the extremal colorings must contain monochromatic $K_4$'s only in one of the two colors.
This translates back to permutations, where all the monotone subsequences of length four are all
either increasing, or decreasing only.
}
\vskip 0.5em
\noindent Keywords: flag algebras, permutation, pattern, density
 \end{abstract}

\section{Introduction}
Our work was inspired by a famous result of Erd\H{o}s and Szekeres~\cite{ErdosS35}
that every permutation on $[n] = \{1,\dots,n\}$, where $n \geq k^2+1$, contains a
monotone subsequence of length $k+1$.
If $n \gg k^2$, one expects that the number of monotone subsequences of length $k+1$
is more than just one, which is guaranteed by~\cite{ErdosS35}. 
According to Myers~\cite{Myers02}, the problem of determining the minimum number
of monotone subsequences of length $k+1$ in permutations on $[n]$ was first posed
by Atkinson, Albert and Holton. 
As in~\cite{Myers02}, we use $m_k(\tau)$ to denote the number of monotone subsequences of length $k+1$
in a permutation $\tau$. The minimum of $m_k(\tau)$ over all permutations $\tau \in S_n$
is denoted by $m_k(n)$.

Myers~\cite{Myers02} described a permutation $\tau_k(n)$ which gives an upper
bound on $m_k(n)$. It consists of $k$ increasing sequences $K$ whose
sizes differ by at most one and every monotone sequence
of length $k+1$ is entirely contained in one of the $K$ sequences. 
In other words, with $t_j = \lfloor jn/k \rfloor$, an example of such a permutation is %can be written as 
\[
\begin{array}{clllllll}
 \tau_k(n)=(&t_{k-1} +1,& t_{k-1}+2,&\ldots,&n-1,& n,&\\
                  & t_{k-2}+1,& t_{k-2}+2,&\ldots,&t_{k-1}-1,&t_{k-1},&\\
                  &\ldots        &                 &         &               &\\
                  &1,              &2,              &\ldots,&t_{1}-1,& t_1&).
\end{array}
\]
See Figure~\ref{fig-Tn} for $\tau_3(12)$.
\begin{figure}[htpd]
\begin{center}
\includegraphics{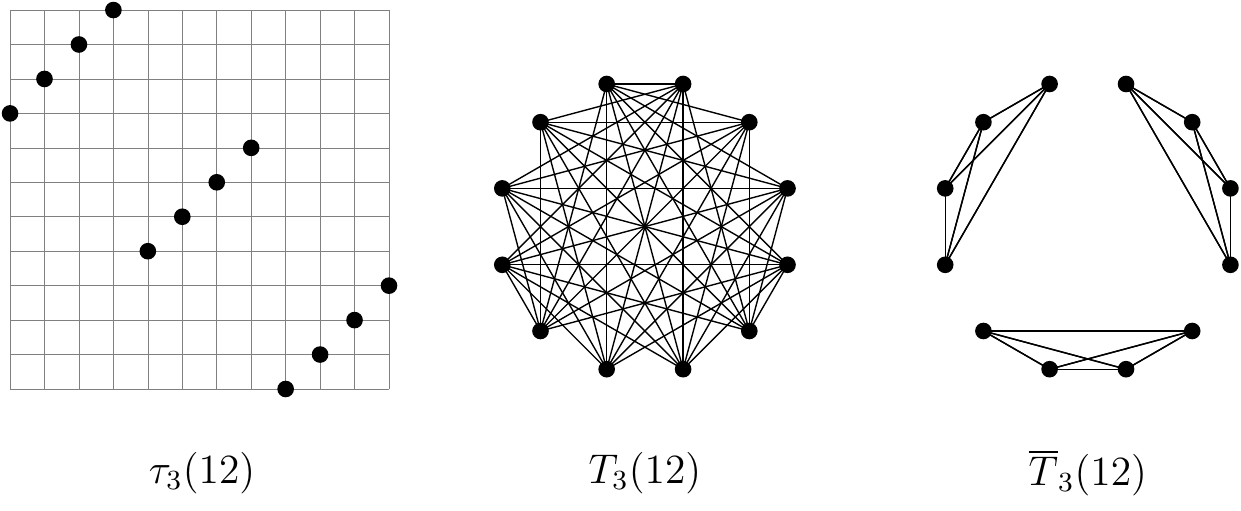}
\end{center}
\caption{Permutation $\tau_3(12)$ and its {\pg} (introduced in Section~\ref{sec:GraphDensity}) ${T}_3(12)$.}\label{fig-Tn}
\end{figure}

Let $r \equiv n \pmod k$, where $0 \leq r < k$. It is easy to see that
\begin{align*}
m_k(\tau_k(n)) = r \binom{\lceil n/k \rceil}{k+1} + (k - r)\binom{\lfloor n/k \rfloor}{k+1} \approx \frac{1}{k^k}\binom{n}{k+1}. \label{eq:mk}
\end{align*}

Myers~\cite{Myers02} proved that $m_2(n) = m_2(\tau_2(n))$ holds and he described all permutations $\tau \in S_n$ where
$m_2(\tau) = m_2(n)$.
{ He conjectured that the same formula actually holds for every $k\in\N$.}
\begin{conjecture}[Myers~\cite{Myers02}]\label{con:exact}
  Let $n$ and $k$ be positive integers. 
  In any permutation of $[n]$ there are at least $m_k(\tau_k(n))$ monotone subsequences of length $k+1$.
\end{conjecture}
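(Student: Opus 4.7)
The plan is to attack Conjecture~1.3 in its full generality by combining the flag algebra / permuton framework used in this paper for $k=3$ with a case split based on the Erd\H{o}s--Szekeres duality. Write $L(\tau)$ and $D(\tau)$ for the lengths of the longest increasing and decreasing subsequences of $\tau$. The first case to handle is $\min\{L(\tau), D(\tau)\} \le k$: if, say, $D(\tau) \le k$, then Dilworth's theorem decomposes $\tau$ into $D(\tau)$ increasing chains of sizes $c_1,\dots,c_{D(\tau)}$ with $\sum c_i = n$, and any decreasing $(k+1)$-subsequence would be an antichain of size $k+1 > D(\tau)$, which is impossible; so every monotone $(k+1)$-subsequence is increasing. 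Counting just those lying inside a single chain and using convexity of $\binom{x}{k+1}$ together with $D(\tau)\le k$ already yields $m_k(\tau) \ge m_k(\tau_k(n))$, and a small extra analysis of cross-chain increasing patterns nails down the extremal characterization in this case. The symmetric case $L(\tau)\le k$ is identical.

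The remaining and genuinely hard case is when both $L(\tau), D(\tau) \ge k+1$. Here I would pass to the space of permuton limits: a weak-$\ast$ limit $\mu$ of a minimizing sequence turns the density of monotone $(k+1)$-patterns into a symmetric $(k+1)$-linear functional $F_k(\mu)$. Using the flag algebra semidefinite programming approach in the style of this paper, combined with a Lagrangian reduction exploiting the natural flip symmetries of the problem, the goal is to establish $F_k(\mu) \ge 1/k^k$, with the unique minimizer (up to reflection) being the $k$-step staircase permuton. A promising structural input is that at any interior point of the support of an extremal $\mu$, the local contributions of long increasing and long decreasing rays must balance against each other, which should force the support to lie on at most $k$ monotone arcs.

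Once the asymptotic bound is in hand, the rest follows along fairly standard lines. Stability gives that any $\tau$ with $m_k(\tau) \le (1 + o(1))\, m_k(\tau_k(n))$ must be $o(1)$-close in the permuton rectangular distance to the $k$-step staircase permuton, by uniqueness of the extremizer and compactness. Finally, for a permutation $\tau$ close to $\tau_k(n)$ in this sense, a local swap / insertion argument in the spirit of Myers' original proof for $k=2$ shows that any deviation from a permutation of the form $\tau_k(n)$ strictly increases $m_k(\tau)$, yielding both the exact bound $m_k(n) = m_k(\tau_k(n))$ and the full characterization of extremal permutations.

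The main obstacle is the asymptotic bound in the hard case for general $k$: the flag algebra SDP becomes computationally intractable as $k$ grows, and no uniform analytic certificate is currently known. One promising route is an inductive reduction, deducing the bound for $k$ from the bound for $k-1$ by restricting a minimizing permuton to a suitable one-dimensional slice and analyzing the resulting permuton of ``length'' $k$; making such an induction preserve both the asymptotic bound and the equality case is where the deepest work will lie.
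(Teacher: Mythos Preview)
Your proposal is not a proof but a research outline, and you concede as much in its final paragraph. The statement you are asked to prove is a conjecture that the paper does \emph{not} settle in general: the paper establishes only the case $k=3$ for sufficiently large $n$ (Theorems~\ref{thm:asym} and~\ref{thm:exact}), via a computer-assisted flag algebra certificate on admissible graphs, followed by stability using the infinite removal lemma, and then an exact cleanup. There is therefore no ``paper's proof'' of the full conjecture to compare against.

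Your first case, $\min\{L(\tau),D(\tau)\}\le k$, is correct and essentially already due to Myers: if $D(\tau)=d\le k$, Dilworth gives $d$ increasing chains of sizes $c_1,\dots,c_d$; padding with $k-d$ zeros and using Schur-convexity of $x\mapsto\binom{x}{k+1}$ yields $\sum_i\binom{c_i}{k+1}\ge m_k(\tau_k(n))$. This is exactly the restricted result the paper attributes to Myers (``subject to the additional constraint that all the monotone subsequences of length $k+1$ are either all increasing or all decreasing'').

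The genuine gap is your second case. Everything you write there is aspirational: ``the goal is to establish $F_k(\mu)\ge 1/k^k$'', a ``promising structural input \dots should force the support to lie on at most $k$ monotone arcs'', and finally ``the main obstacle is the asymptotic bound \dots no uniform analytic certificate is currently known''. None of these is an argument. The paper needed a specific SDP certificate already for $k=3$, and even in that case the asymptotic bound does not by itself handle permutations with both a $K_4$ and a $\overline{K_4}$; the structural information comes from analyzing which $7$-vertex subgraphs are tight in~\eqref{eq:lowerBound}. Your proposed ``inductive reduction by restricting a minimizing permuton to a one-dimensional slice'' is not defined, and there is no reason to believe such a slice would again be extremal for $k-1$. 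As written, the second case contains no verifiable step, so the proposal does not constitute a proof of Conjecture~\ref{con:exact}.
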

Notice that any permutation $(a_1,\ldots,a_n)$ and its reverse $(a_n,\ldots,a_1)$ contain the same number
of monotone subsequences, only the increasing subsequences change to decreasing subsequences and vice versa. 
In particular, $m_k(\tau_k(n)) = m_k(\tau_k^R(n))$, where $\tau_k^R(n)$ denotes the reverse of $\tau_k(n)$.
Moreover, there might be other permutations $\tau$ such that $m_k(\tau)=m_k(\tau_k(n))$.

As we already mentioned, Myers showed the conjecture is true for $k=2$,
which is actually a consequence of Goodman's formula~\cite{Goodman:1959}.
Very recently, Samotij and Sudakov~\cite{SamoSud:2014} confirmed the conjecture if
$n \le k^2 + ck^{3/2}/\log k$ for some absolute positive constant $c$, provided
$k$ is sufficiently large.

Subject to the additional constraint that all the monotone subsequences of
length $k+1$ are either all increasing or all decreasing and $n \geq k(2k-1)$,
Myers proved that every such a permutation contains at least the conjectured number %$m_K(\tau_k(n))$
of monotone subsequences of length $k+1$.
He also gave the list $\mathcal{W}_n^k$ of all such permutations $\tau$ of
$[n]$ that satisfy $m_k(\tau) = m_k(\tau_k(n))$.  Every permutation from
$\mathcal{W}_n^k$ can be decomposed into $k$
disjoint monotone subsequences $s_1,\ldots,s_k$ that are either all increasing or all decreasing and
their sizes differ by at most one.
Moreover, every monotone subsequence of length $k+1$ is a subsequence of $s_j$ for some $j$.
These permutations look similar to  $\tau_k(n)$ or $\tau_k^R(n)$.
It turns out that there are $2\binom{k}{n \ourmod k}C_k^{2k-2}$ of them, where $C_k$ is the $k^{\rm th}$ Catalan number. 

The interested reader can find the precise definition of $\mathcal{W}_n^k$ for general $k$ in~\cite{Myers02}. 
Here, we study the number of monotone subsequences with  $k=3$.
Hence we give a simpler alternative definition for $\mathcal{W}_n^3$, where $n \geq 15$.

First we describe a method to get any permutation from $\mathcal{W}_n^3$
with no increasing subsequence of length $4$.

\begin{enumerate}
\item Start with the identity permutation.
\item Divide it into $3$ blocks such that 
	the size of each block is $\lfloor n/3 \rfloor$ or $\lfloor n/3\rfloor+1$. 
	More formally, choose elements $b_1$ and $b_2$ such that $b_1$, $b_2-b_1$ and $n-b_2$ are all from the set $\{\lfloor n/3\rfloor,\lfloor n/3\rfloor+1\}$.
	Then the three blocks are $(1,2,\ldots,b_1)$, $(b_1+1,b_1+2,\ldots,b_2)$, $(b_2+1,b_2+2,\ldots,n)$. 
	(There are $1$ or $3$ choices for the pair $(b_1,b_2)$, depending on the remainder of dividing $n$ by $3$.)
\item Reverse the blocks. At this point we have the permutation 
	$(b_1,b_1-1,\ldots,2,1,b_2,b_2-1,\ldots,b_1+2,b_1+1,n,n-1,\ldots,b_2+2,b_2+1)$.
\item Change the subsequence $(2,1,b_2,b_2-1)$ to one of the following:
	$(2,1,b_2,b_2-1)$, $(2,b_2,1,b_2-1)$, $(2,b_2,b_2-1,1)$, 
	$(b_2,2,1,b_2-1)$, $(b_2,2,b_2-1,1)$.
\item Make a similar replacement for the subsequence $(b_1+2,b_1+1,n,n-1)$.
\item Change the subsequence $(b_1,b_1-1,b_1+2,b_1+1)$ to one of the following:
	$(b_1,b_1-1,b_1+2,b_1+1)$, $(b_1+1,b_1-1,b_1+2,b_1)$, $(b_1+2,b_1-1,b_1+1,b_1)$,  
	$(b_1+1,b_1,b_1+2,b_1-1)$, $(b_1+2,b_1,b_1+1,b_1-1)$.
\item Make a similar replacement for the subsequence $(b_2,b_2-1,b_2+2,b_2+1)$.
\end{enumerate}

Each above permutation (as well as its reverse) belongs to $\mathcal{W}_n^3$ since it has 
$m_3(\tau_3(n))$ monotone subsequences of length $4$, all of which are decreasing. For $n\ge 15$, 
we exhaust all of $\mathcal{W}_n^3$, as the number of obtained permutations, $2 \cdot 5^4 
\binom{3}{r}$ where $r$ is the remainder of dividing $n$ by $3$, coincides with the value of 
$|\mathcal{W}_n^3|$ obtained by Myers.

To illustrate the above process, let $n = 17$. 
We start with $(1, 2, \ldots, 17)$. Let $b_1 = 5, b_2 = 11$.
After the reversal of the blocks, we have $(5,4,3,2,1,11,10,9,8,7,6,17,16,15,14,13,12)$. 
Now we can change, one by one in the given order, the subsequences 
$(2,1,11,10)$, $(7,6,17,16)$, $(5,4,7,6)$, $(11,10,13,12)$ to
$(11,2,1,10)$, $(17,7,6,16)$, $(6,5,7,4)$, $(13,10,12,11)$ respectively,
to get 
\[(6,5,3,13,2,1,10,9,8,17,7,4,16,15,14,12,11).\]
\begin{figure}
\begin{center}
\includegraphics{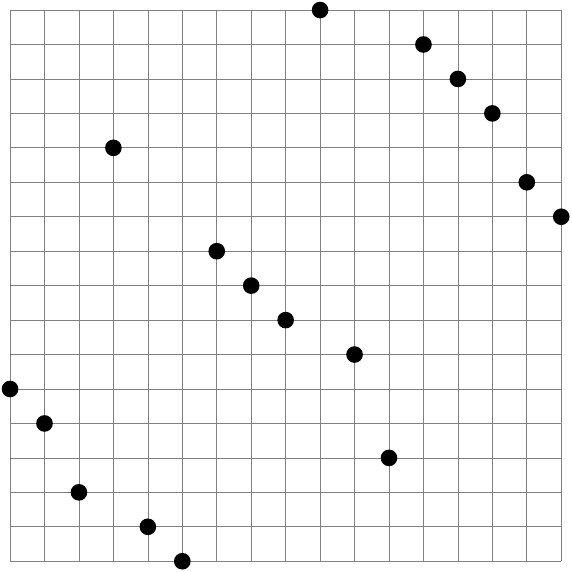}
\end{center}
\caption{Permutation (6,5,3,13,2,1,10,9,8,17,7,4,16,15,14,12,11).}\label{fig-extremal}
\end{figure}
This permutation is depicted in Figure~\ref{fig-extremal}.

In his paper, Myers~\cite{Myers02} also conjectured a weaker asymptotic version.
\begin{conjecture}[Myers~\cite{Myers02}]\label{con:asym}
  Let $k$ be positive integer and let $n \rightarrow \infty$.
  In any permutation of $[n]$ there are at least $(1+o(1))\binom{n}{k+1}/k^k$ monotone subsequences of length $k+1$.
\end{conjecture}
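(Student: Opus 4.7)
The plan is to pass to the limit: a permutation $\sigma$ of $[n]$ corresponds to the probability measure on $[0,1]^2$ placing mass $1/n$ at each point $((i-\tfrac12)/n,(\sigma(i)-\tfrac12)/n)$, and a convergent subsequence produces a \emph{permuton}, a Borel probability measure on $[0,1]^2$ with uniform marginals. The monotone $(k+1)$-density $d_{\mathrm{mon}}(\mu)$---the probability that $k+1$ i.i.d.\ samples from $\mu$, sorted by $x$-coordinate, have monotone $y$-coordinates---is continuous in the sampling topology, and $m_k(\sigma_n)/\binom{n}{k+1} \to d_{\mathrm{mon}}(\mu)$ when $\sigma_n \to \mu$. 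Conjecture~\ref{con:asym} thus reduces to proving $d_{\mathrm{mon}}(\mu) \geq 1/k^k$ for every permuton $\mu$; the block anti-diagonal permuton (limit of $\tau_k(n)$) realizes equality, and by compactness the infimum is attained at some minimizer $\mu^*$.

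The substantive step is structural: I would aim to show that every $\mu^*$ is a \emph{block permuton}, i.e.\ supported on a staircase of at most $k$ monotone arcs arranged anti-diagonally. The natural tool is a measure-preserving perturbation (a ``rectangle swap''): given disjoint axis-parallel rectangles $I \times J$ and $I' \times J'$ with $I \cap I' = \emptyset$ and $J \cap J' = \emptyset$, redistribute $\varepsilon$ mass among the four rectangles $I \times J$, $I' \times J'$, $I \times J'$, $I' \times J$ in the unique way preserving both marginals. Computing the first-order change of $d_{\mathrm{mon}}$, one argues that any minimizer deviating from block-anti-diagonal form admits a strictly decreasing swap, contradicting minimality. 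With block structure in hand, the problem becomes a finite-dimensional optimization over block masses $\alpha_1,\ldots,\alpha_\ell$ with $\sum_j \alpha_j = 1$: within-block patterns contribute $\sum_j \alpha_j^{k+1}$ (minimized at equal masses by Jensen's inequality, giving $1/\ell^k$), and a cross-block ``rainbow decreasing'' contribution appears exactly when $\ell \geq k+1$ and strictly raises the total. Hence the minimum $1/k^k$ is achieved precisely at $\ell = k$ equal blocks.

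The main obstacle is the structural classification. For $k = 2$ it follows from Goodman's identity, and for $k = 3$ the present paper expends substantial flag-algebra and stability machinery to secure it. For general $k$, a perturbation of a non-block permuton can trade monotone increasing patterns for monotone decreasing ones, so no naive local move decreases $d_{\mathrm{mon}}$ unconditionally. Finding a combinatorial compression---an ordering on permutons along which $d_{\mathrm{mon}}$ is guaranteed to decrease toward block form---would be the clean route, but no such compression is known for $k \geq 4$, and this is where genuinely new ideas are needed.
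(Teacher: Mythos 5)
Your proposal is not a proof, and it is important to be clear that the statement you were given is Conjecture~\ref{con:asym} itself: the paper does not prove it in this generality. The paper establishes only the case $k=3$ (Theorem~\ref{thm:asym}, via its graph reformulation Theorem~\ref{thm:asymG}), and it does so by a semidefinite-programming certificate in the flag algebra framework (types $P_1$, $P_3$, $\overline{P_3}$, flags on $7$ vertices, followed by a rational rounding of the SDP solution), not by any structural classification of minimizers. So there is no ``paper proof'' of the general statement for your argument to match, and your own argument has an acknowledged hole exactly where the mathematical content lies.

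Concretely: the reduction to permutons (compactness, continuity of the monotone $(k+1)$-pattern density, so that it suffices to show $d_{\mathrm{mon}}(\mu)\ge 1/k^k$ for every permuton $\mu$) is sound and standard, and the closing finite-dimensional optimization over block permutons is plausible (though even there the claim that $\ell\ge k+1$ blocks strictly increase the count needs an argument, since $\sum_j \alpha_j^{k+1}$ alone decreases with $\ell$; this is essentially the restricted case Myers proved). But the pivotal step --- that every minimizing permuton is a block permuton built from at most $k$ monotone arcs --- is precisely the open problem, and your rectangle-swap perturbation is only a proposal for attacking it: as you note yourself, such swaps can trade increasing $(k+1)$-patterns for decreasing ones, so no first-order analysis is exhibited that forces block structure, and no compression ordering is known. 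Since the entire lower bound rests on that classification, the proposal is a research plan with the decisive lemma missing, not a proof of Conjecture~\ref{con:asym}; for comparison, even in the solved case $k=3$ the paper sidesteps this structural route entirely, obtaining the density bound $1/27$ directly from the flag algebra certificate and invoking structure (via Proposition~\ref{prop:forbiddenGraphEps}, the removal lemma, and Theorem~\ref{thm:stability}) only later, to pass from the asymptotic bound to the exact extremal result.
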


First, we prove Conjecture~\ref{con:asym} for $k=3$.
\bth\label{thm:asym} 
Any permutation of $[n]$ contains at least $(1/27+o(1))\binom{n}{4}$ monotone subsequences of length $4$.
\eth

Our  main result is proving Conjecture~\ref{con:exact} for $k=3$ and $n$ sufficiently large.

\bth\label{thm:exact}
There exists $n_0$ such that if $n \geq n_0$, then every permutation $\tau$ on $[n]$ contains at least $$\binom{\floor{n/3}}{4} + \binom{\floor{(n+1)/3}}{4} + \binom{\floor{(n+2)/3}}{4}$$ monotone subsequences of length $4$, with equality if and only if $\tau \in \mathcal{W}_n^3$.
\eth

Our results are proved using the flag algebra {framework} and the stability method.
Although Theorems~\ref{thm:asym} and \ref{thm:exact} are stated in terms of permutations, we translate
them to the language of graph theory since the resulting computations and arguments are simpler.
In graph theory language, we minimize the number of copies of $K_4$ and  $\OO K_4$ over graphs from permutations on $[n]$.
Let us note that the question of  minimizing the number of copies of $K_4$ and  $\OO K_4$ over all graphs on $n$ vertices is open.
The best upper bound $\approx 1/33$ is due to Thomason~\cite{Thomason:1989}.
The first known lower bound $\approx 1/46$ is due to Giraud~\cite{Giraud}.
It was improved using flag algebras to $0.0287...$ by Sperfeld~\cite{SperfeldK4:2011} and independently by Nie{\ss}~\cite{Niess:2012},
and then further improved by Flagmatic~\cite{Flagmatic} to $0.0294... \approx 1/34$. 

{We also had a computer program, developed originally by Dan Kr\'a\soft{l}, doing flag algebra
computations for permutations directly.}
It was easy to modify this program to compute upper bounds 
on densities of other subsequences instead of lower bounds for monotone subsequences. The results that we obtained will be explained in the next paragraph.

The \emph{packing density} of a permutation $\tau \in S_k$ is the limit for $n\rightarrow \infty$
of the maximum density of $\tau$ in $\sigma$ over all $\sigma \in S_n$. We denote the limit by $\delta(\tau)$.
The packing density is well understood~\cite{AlbertAHHS02} for the so-called \emph{layered permutations}\footnote{
A permutation $\tau \in [n]$ is \emph{layered} if there exist positive numbers $n_1,\ldots,n_r$ summing to $n$,
such that $\tau$ starts with the $n_1$ first positive integers in reverse order, followed by the next $n_2$ positive integers in reverse order and so on. For example $\tau_k^R(n)$ is a layered permutation.}.
Up to a symmetry, this includes all permutations in $S_3$ and all but two permutations, 1342 and 2413, from $S_4$.
Albert, Atkinson, Handley, Holton, and Stromquist~\cite{AlbertAHHS02} proved that $0.19657 \leq \delta(1342) \leq 2/9$
and $51/511 \leq \delta(2413) \leq 2/9$.
Presutti~\cite{Presutti08} improved the lower bound  for $\delta(2413)$ to 0.1024732.
Further improvement on the lower bound was obtained by Presutti and Stromquist~\cite{Presutti10}
who showed that $0.1047242275767320904\ldots \leq \delta(2413)$ and conjectured that it is the correct value.
A direct application of {the semidefinite method from the flag algebra framework} for permutations on $S_7$ gave upper 
bounds $\delta(1342) \leq 0.1988373$ and $\delta(2413) \leq 0.1047805$.
Since our upper bounds do not match the lower bounds, we will not discuss
these bounds any further in this paper.

This paper is organized as follows. In the following section,
 we translate the problem of determining the density of monotone subsequences in permutations to determining densities of particular
induced subgraphs in permutation graphs.
In Section~\ref{sec:FA}, we describe how we use the framework of flag algebras and we will prove Theorem~\ref{thm:asym}.
Our proof of the density result actually provides some additional information about the extremal structures,
which leads to a proof of a stability property for this problem. This is discussed in Section~\ref{sec:stability}.
Finally, in Section~\ref{sec:exact}, we use the stability property to prove Theorem~\ref{thm:exact}.

We utilize the semidefinite method from flag algebras to formulate our question about subgraph densities
as an optimization problem, more precisely, as a semidefinite programming problem. With a computer assistance, we generate
this semidefinite programming problem and then we use CSDP~\cite{Borchers:1999}, an open-source semidefinite
programming library, to find a numerical (approximate) solution to the problem. In order to obtain an exact result,
the numerical solution needs to be rounded. This was done again with a computer assistance
in a computer algebra software SAGE~\cite{sw:sage}.
We had trouble finding a detailed description of rounding in other papers.
Hence we decided to include more details about our rounding procedure in the
appendix.

Our computer programs, their outputs, and their description for the flag algebra part of this paper can be downloaded at
\oururl.

\section{Graph Densities}\label{sec:GraphDensity}
Given a graph $G$, we use $V(G)$ and $E(G)$ to denote its vertex and edge sets respectively, and let $v(G)=|V(G)|, e(G) = |E(G)|$. For a vertex $v$ of $G$, we denote the set of its neighbors by $\Gamma_G(v)$.
We omit a subscript, if $G$ is clear from the context. Given two graphs $G$ and $G'$, an \emph{isomorphism} between them is a  bijection $f: V(G) \to V(G')$ satisfying $f(v_1)f(v_2)\in E(G')$ if and only if $v_1v_2\in E(G)$.  
Two graphs $G$ and $G'$ are \emph{isomorphic} $(G \cong G')$ if and only if there is an isomorphism between them.
For a graph $G$ and a vertex set $U\subseteq V(G)$, denote by $G[U]$ the induced subgraph of $G$ on vertex set $U$.
Suppose $H$ and $G$ are graphs on $l$ and $n$ vertices respectively. Let $\P(H, G)$ be the number of $l$-subsets $U$ of $V(G)$ such that $G[U]\cong H$, and define the \emph{density} of $H$ in $G$ to be
\[
p(H, G) = \frac{\P(H, G)}{\binom{n}{l}}.
\]

Given a permutation $\tau$ of $[n]$, define its \emph{\pg}
to be a graph on vertex set $[n]$ where $ij$ with $i<j$ is an edge if and only if $\tau(i)>\tau(j)$. Call an $n$-vertex graph $G$
\emph{admissible} if there is a permutation of $[n]$ whose {\pg} is isomorphic to $G$, so the vertex set of $G$ may not be $[n]$.
Denote by 
$\M_l$ the set of admissible graphs on $l$ vertices, up to isomorphism.
It is easy to see that if $G$ is admissible, then so are $\OO G$ and all induced subgraphs of $G$.  

Given a permutation $\tau$ of $[n]$, let $G$ be its {\pg}. Then the number of monotone subsequences of length $4$ in $\tau$ is equal to the number of $K_4$'s and $\OO K_4$'s in $G$, i.e., $m_3(\tau)=\P(K_4, G) + \P({\OO K_4}, G)$.
Let 
\[ 
F(G) = \P(K_4, G) + \P({\OO K_4}, G)\Text{and} f(G) = p(K_4, G) + p({\OO K_4}, G).
\]
Instead of proving Theorem~\ref{thm:asym} directly, we prove its reformulation to the language of graphs and densities.
\bth\label{thm:asymG}
If $G$ is an admissible graph on $n$ vertices, then $f(G)\ge 1/27+o(1)$, where $o(1) \to 0$ as $n\to \infty$.
\eth

It is easy to see that

\beq\label{eq:trivilBound}
f(G) = \sum_{H\in \M_l}f(H)p(H,G)\text{for}4\le l\le n.
\eeq

Therefore $\min_{H\in\M_l}f(H)$ provides a lower bound on $f(G)$ (since $0\le p(H,G)\le 1$ and $\sum_{H\in \M_l}p(H,G) = 1$), though this bound is unsurprisingly weak for small $l$.

Denote by $T_3(n)$ the  $3$-partite Tur\'an graph on $n$ vertices (i.e.  complete $3$-partite graph on $n$ vertices with sizes of parts
differing by at most one). 
We can see that $T_3(n)$ is the {\pg}  of $\tau_3(n)$. See Figure~\ref{fig-Tn} for an example, where $n=12$.
\bth\label{thm:exactG}
There exists an $n_0$ such that 
if $G$ is an admissible graph on $n \geq n_0$ vertices 
minimizing $F$ over all admissible graphs on $n$ vertices,
then $G$ is obtained from $T_3(n)$ by removing edges
or $G$ is obtained from $\OO{T_3(n)}$ by adding edges.
\eth
\Remark 
Let $G$ be an extremal graph. 
By Theorem~\ref{thm:exactG}, $G$ can be transformed into $T_3(n)$ or $\OO{T_3(n)}$. 
We may assume without loss of generality (w.l.o.g.) that $G$ is obtained from $T_3(n)$ by removing edges.
Since $T_3(n)$ does not contain any copy of $K_4$ and removing edges does not introduce new copies of $K_4$, there are no $K_4$'s in $G$.
Moreover, since $G$ is extremal and removing edges does not destroy any copy of $\OO K_4$, the numbers of copies of $\OO K_4$ in $G$ and $T_3(n)$ are equal.
Hence we know that in an extremal permutation $\tau$, monotone subsequences of length $4$ are either all increasing or all decreasing. Thus $\tau$ belongs to the family $\mathcal{W}_n^3$
constructed by Myers (and Theorem~\ref{thm:exact} follows from Theorem~\ref{thm:exactG}). 
In fact, it is not hard to see that $\tau\in \mathcal{W}_n^3$ directly. Indeed,
$\tau$ can be decomposed into three monotone subsequences $s_1,s_2,s_3$, that correspond to the parts of Tur\'an graph,
and all monotone 4-subsequences are entirely contained in them. Then it follows that the domains of $s_1,s_2,s_3$ form three consecutive intervals of $[n]$, except some possible intertwining at their
ends that involves at most two elements from each interval, which leads to the desired structure of $\tau$.

\section{Flag Algebra Settings}\label{sec:FA}
The flag algebra method, invented by Razborov~\cite{Razborov:2007},   is a very general machinery and has been widely used in extremal graph theory. 
See~\cite{Razborov13} for a recent survey of flag algebra applications.
To name just some of them: flag algebra was used for attacking the
Caccetta-H\"aggkvist conjecture~\cite{HladkyKN:2009,RazborovCH:2011},
determining induced densities of graphs~\cite{DasHMNS:2012,Grzesik:2011,Hatami:2011,PikhurkoR:2012,PikhurkoV:2013},
of hypergraphs~\cite{BaberT:2011,Falgas:2011,GlebovKV:2013,Pikhurko:2011},
of oriented graphs~\cite{Sperfeld:2011},
of subhypercubes in hypercubes~\cite{Baber:2012,BaloghHLL:2014},
of colored graphs in a colored environment~\cite{BaberT:2013,CummingsKPSTY:2012,HatamiJKNR:2012,KralLSWY:2012}, and 
for attacking some problems in geometry~\cite{Kral:2011}.

We apply this method to the family of admissible graphs.
A \emph{type} $\s$ is an admissible graph on vertex set $[k]$
for some non-negative integer $k$, where $k$ is called the \emph{size} of $\s$, denoted by $|\s|$. We use $0$ and $1$ to denote (the unique) types of size $0$ and $1$ respectively.
A \emph{$\sigma$-flag} $F$ is a pair $(M,\theta)$ where $M$ is an admissible graph and $\theta:[k]\to V(M)$ induces a labeled copy of $\s$ in $M$. In other words, we use $[k]$ to label $k$ vertices of an unlabeled graph $M$, and the labeled vertices induce a labeled copy of $\s$.
Two $\s$-flags $F_1 = (M_1, \theta_1)$ and $F_2=(M_2, \theta_2)$ are \emph{isomorphic} (denoted as $F_1\cong F_2$) if there exists a graph isomorphism $f:V(M_1)\to V(M_2)$ such that $f\theta_1 = \theta_2$. Such a function $f$ is called a \emph{flag isomorphism} from $F_1$ to $F_2$. Given an admissible graph $M$, if all $\s$-flags with the underlying graph $M$ are isomorphic, then we use $M^{\sigma}$ to denote this unique $\s$-flag, see Figure~\ref{fig-K4} for an example where $M\in \{K_4,\overline{K}_4\}$.
Denote by $\FS_l$ the set of $\s$-flags on $l$ vertices, up to isomorphism. Note that $\F_l^0$ is just $\M_l$ and $\FS_{|\s|} = \{\s\}$.

\begin{figure}[htpd]
\begin{center}
\includegraphics{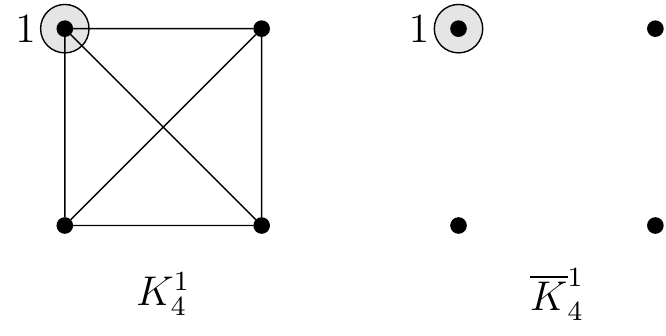}
\end{center}
\caption{$1$-flags $K_4^1$ and $\OO K_4^1$.}\label{fig-K4}
\end{figure}

In Section~\ref{sec:GraphDensity}, we defined graph density $p(H,G)$, which extends to flag density in a straightforward way. Given $\s$-flags $F\in\FS_l$ and $K = (G,\theta)\in \FS_n$ for $l\le n$, define $P(F,K)$ to be the number of $l$-subsets $U$ of $V(G)$ such that $\im(\theta)\subseteq U$ and $(G[U],\theta)\cong F$. Additionally, define $p(F,K)$, the \emph{density} of $F$ in $K$ as
\[
p(F, K) = \frac{P(F, K)}{\binom{n-|\s|}{l-|\s|}}.
\]
By convention, we set $P(F,K) = 0$ if $n<l$.
More generally, given flags $F\in \FS_{l}$, $F'\in \FS_{l'}$ and $K = (G,\theta)\in \FS_n$, where $n \ge l+l'-|\sigma|$, we define a \emph{joint density} $p(F,F'; K)$ as the probability that if we choose two subsets $U, U'$ of $V(G)$ uniformly at random, subject to the conditions $|U|=l, |U'|=l'$ and $U\cap U' = \im(\theta)$, then $(G[U],\theta)\cong F$ and $(G[U'],\theta)\cong F'$. 
In this paper, whenever we use $p(F,K)$ or $p(F,F';K)$, we assume that the size of $K$ is large enough.

It is not very hard to show that (see Lemma 2.3 in \cite{Razborov:2007})
\beq\label{eq:product}
p(F,K)p(F',K) = p(F,F';K) + o(1),
\eeq
where $o(1)$ tends to $0$ as $n$ tends to infinity.
Let $X = [F_1,\ldots, F_t]$ be a vector of $\s$-flags with $F_i\in\FS_{l_i}$.
For any such $X$ and a $\s$-flag $K$ define $X_K = [p(F_1;K),\ldots, p(F_t;K)]$.
It follows that for any $t$-by-$t$ positive semidefinite matrix $Q = \{Q_{ij}\}$, we have
\beq\label{eq:Q}
0 \leq X_K^TQX_K = \sum_{ij}Q_{ij}p(F_i;K)p(F_j;K) = \sum_{i,j}Q_{ij}p(F_i,F_j;K) +o(1).
\eeq
In the definition of $p(F,K)$ and $p(F,F';K)$, we require $F,F'$ and $K$ to be $\s$-flags, but the definition itself extends to the case where $F,F'$ are $\s$-flags but $K$ is not. In this case, by the definition, we have $p(F,K) = p(F,F'; K) = 0$. Let $\Theta(k, G)$ be the set of all injective mappings from $[k]$ to $V(G)$ where $G$ is an admissible graph. 

We can extend \eqref{eq:Q} to any $\theta\in\Theta(|\s|, G)$:
\[
0\le\sum_{i,j}Q_{ij}p(F_i,F_j;(G,\theta)) +o(1).
\]
Therefore, if we choose $\theta$ from $\Theta(|\s|, G)$ uniformly at random, then its expectation is non-negative:
\begin{align*}
0&\le \sum_{i,j}\e_{\theta\in\Theta(|\s|, G)}\left[Q_{ij}p(F_i,F_j;(G,\theta))\right] +o(1) \\
&= \sum_{H\in \M_l}\left( \sum_{i,j}\e_{\theta\in\Theta(|\s|, H)}\left[Q_{ij}p(F_i,F_j;(H,\theta))\right]\right)p(H,G) +o(1).
\end{align*}
(Recall that we assumed that $l\geq 2l_i-|\s|$ for each $i$.)
Note that the coefficient of $p(H,G)$ is determined by $\s,X,Q$ and $H$. In particular, it is independent of $G$, so denote this coefficient by $c_H(\s,X,Q)$. Then we have 
\[
\sum_{H\in \M_l}c_H(\s,X,Q)p(H,G)+o(1) \ge 0.
\]

Every choice of $\s, X, Q$ gives one such inequality. We can add the inequalities obtained
for several different types $\s_i$, using appropriate $X_i$ and $Q_i$.
Denoting $c_H = \sum_i c_H(\s_i,X_i,Q_i)$, we obtain
\[
\sum_{H\in \M_l}c_H \cdot p(H,G)+o(1) \ge 0.
\]
Then together with \eqref{eq:trivilBound} we have 
\beq\label{eq:lowerBound}
f(G) +o(1) \ge \sum_{H\in \M_l} (f(H)-c_H)\cdot p(H,G) \geq  \min_{H\in\M_l} (f(H)-c_H).
\eeq
By~\eqref{eq:lowerBound}, if for some choice of (large enough) $l$ and $c_H$ we have 
\begin{align}
\min_{H\in\M_l} (f(H)-c_H) = 1/27, \label{eq:27}
\end{align}
then we would prove Theorem~\ref{thm:asymG}.

\begin{proof}[Proof of Theorem~\ref{thm:asymG}]
We show \eqref{eq:27} with $l=7$, where $|\M_7| = 776$. We use three choices of $(\sigma, X,Q)$.
We use types $\s_0: P_1$, $\s_1: P_3$, and $\s_2: {\OO P_3}$, where $P_i$ is a path on $i$ vertices, see Figure~\ref{fig-types}.

\begin{figure}[htpd]
\begin{center}
\includegraphics{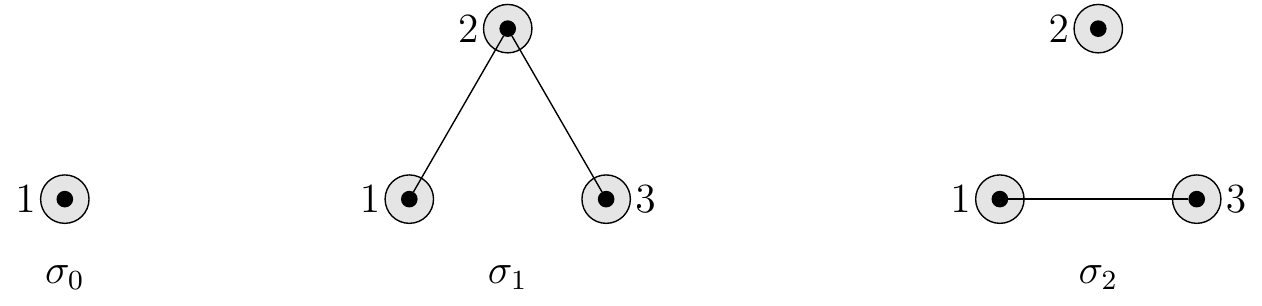}
\end{center}
\caption{Types used in flag computation.}\label{fig-types}
\end{figure}

For $\s_0$, $X_0$ consists of flags in $\F^{\s_0}_4$, for $\s_i$ with $i=1,2$, $X_i$ consists of flags in $\F^{\s_i}_5$.
Here we have $|\F^{\s_0}_4| = 20$ and $|\F^{\s_1}_5| = |\F^{\s_2}_5| = 71$.
As we already mentioned, the flag algebra method is computer assisted. We use a computer program to find $\M_7, \F^{\s_0}_4, \F^{\s_1}_5, \F^{\s_2}_5$, and to compute $\e_{\theta}~ p(F,F';(H,\theta))$ for each $H\in\M_y$. Then finding positive semidefinite matrices $Q_0, Q_1, Q_2$ to maximize $\min_{H\in \M_7} (f(H)-c_H)$ can be done by computer solvers such as CSDP~\cite{Borchers:1999} and SDPA~\cite{Yamashita10ahigh-performance}.
Unfortunately, solvers can only give an approximate solution.
For this problem, we get $0.0370370369999$. 
In order to get exactly $1/27$, we need to round the matrices $Q_0,Q_1,Q_2$ found by a computer solver. 
By rounding we mean finding  rational matrices $Q_0',Q_1',Q_2'$ which would make the computations exactly $1/27$ when computed over rational numbers.

To simplify the process of rounding, we reduce the number of variables and constraints by restricting the set of feasible solutions. 
For $i\in \{1,2,3\}$ and flags $F_1,F_2$ denote by  $Q_i(F_1,F_2)$ the entry in $Q_i$ corresponding
to indices of $F_1$ and $F_2$ in $X_i$.
Since $f(H) = f(\overline H)$ for every graph $H$, a natural restriction
is that
\begin{align}
 f(H) - c_H = f(\overline H) - c_{\overline H}\label{eq:equal}
\end{align}
for every graph $H$. This will allow us to consider only one of $H,\overline{H}$ and thus decrease the number
of constraints from 776 to 388 since there is no self-complementary graph on 7 vertices as the number
of possible edges and non-edges is $\binom{7}{2}=21$ which is an odd number.

Since $\overline{\s_1}=\s_2$, we add the constraints $Q_1(F_1,F_2) = Q_2(\overline{F_1},\overline{F_2})$ for every $F_1,F_2 \in \F^{\s_1}_5$. This makes $Q_2$ completely defined by $Q_1$.
Moreover, we add the constraints $Q_0(F_1,F_2) =  Q_0(\overline{F_1},{F_2}) = Q_0({F_1},\overline{F_2}) = Q_0(\overline{F_1},\overline{F_2})$ for every $F_1,F_2 \in \F^{\s_0}_4$.
This reduces the number of entries to round in the symmetric matrix 
$Q_0$ from ${21\choose 2}$ to ${11\choose 2}$. 

We reduced the number of constraints from $776$ to $388$, and we reduced the number of variables from ${21\choose 2}+2{72\choose 2}$ to ${11\choose 2}+{72\choose 2}$.
With these reductions, we managed to round the entries
in $Q_1,Q_2$ and $Q_3$ and thus we obtained a solution for \eqref{eq:27}.

The rounded matrices as well as programs computing all possible $X$ and performing the rounding process
can be obtained at \oururl. 

We give more details about the rounding step in the appendix.
\end{proof}

In~\eqref{eq:27}, we not only have that the minimum of $f(H)-c_H$ is $1/27$, which proves Theorem~\ref{thm:asymG}, 
but we also have the values of $f(H)-c_H$ for each $H$ in $\M_7$. 

Let $\L = \{H\in \M_7 : f(H)-c_H = 1/27\}$. We listed $\L$ in Figure~\ref{fig-tight}.
We have the following proposition for graphs not in $\L$.

\begin{prop}\label{prop:forbiddenGraph}
Let $G$ be an admissible graph of order $n\to\infty$ such that $f(G)=\frac1{27}+o(1)$. 
If $H\in\M_7\setminus\L$, then $ p(H,G) = o(1)$.
\end{prop}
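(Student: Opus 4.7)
The plan is to extract this from the inequality \eqref{eq:lowerBound} established during the proof of Theorem~\ref{thm:asymG}. Recall that in that proof we found types $\s_i$, flag vectors $X_i$, and positive semidefinite matrices $Q_i$ (with rational entries after rounding) yielding coefficients $c_H$ such that $f(H)-c_H \ge 1/27$ for every $H\in\M_7$, with equality precisely when $H\in \L$. The assumption $f(G)=\tfrac1{27}+o(1)$ together with \eqref{eq:lowerBound} will force every tiny slack $f(H)-c_H-\tfrac1{27}\ge 0$ associated with a non-tight graph to contribute negligibly to the total.

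First I would combine \eqref{eq:trivilBound} with the flag algebra inequality so that
\begin{equation*}
\tfrac{1}{27} + o(1) \;=\; f(G)+o(1) \;\ge\; \sum_{H\in\M_7}(f(H)-c_H)\,p(H,G) \;=\; \tfrac{1}{27} + \sum_{H\in\M_7}\bigl(f(H)-c_H-\tfrac{1}{27}\bigr)\,p(H,G),
\end{equation*}
where in the last step I used $\sum_{H\in\M_7}p(H,G)=1$. Writing $\delta_H := f(H)-c_H-\tfrac1{27}\ge 0$, this rearranges to
\begin{equation*}
\sum_{H\in\M_7}\delta_H\,p(H,G) \;=\; o(1).
\end{equation*}

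Next I would exploit the finiteness of $\M_7$: set $\eps := \min\{\delta_H : H\in \M_7\setminus\L\}$, which is a strictly positive constant because, by the very definition of $\L$, each such $\delta_H$ is positive and there are only finitely many of them. For any fixed $H\in\M_7\setminus\L$, nonnegativity of all terms in the sum gives $\eps\cdot p(H,G)\le \sum_{H'}\delta_{H'}p(H',G)=o(1)$, hence $p(H,G)=o(1)$ as desired.

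There is no serious obstacle here; the whole argument is a one-line consequence of the tight flag algebra certificate. The only thing to be careful about is that the rounded rational solution obtained in the proof of Theorem~\ref{thm:asymG} really gives $f(H)-c_H=\tfrac1{27}$ exactly for $H\in\L$ and strictly greater than $\tfrac1{27}$ for the remaining members of $\M_7$; this is verified in the explicit data accompanying the paper (available at \oururl), so the constant $\eps>0$ above is genuinely available.
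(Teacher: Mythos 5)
Your argument is correct and is essentially the paper's own proof: both start from \eqref{eq:lowerBound} with $l=7$, use $\sum_{H\in\M_7}p(H,G)=1$ to isolate the nonnegative slacks $f(H)-c_H-\tfrac1{27}$, and conclude via the strict positivity of these slacks for $H\notin\L$ (finiteness of $\M_7$ making the minimum slack a genuine positive constant, exactly as quantified later in Proposition~\ref{prop:forbiddenGraphEps}). No gaps; nothing further is needed.
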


\begin{proof}
Using \eqref{eq:lowerBound}, we have that 
\[
\frac1{27}+o(1) = f(G) +o(1) \ge \sum_{H\in \M_l} (f(H)-c_H)\cdot  p(H,G).
\]
In this section, we showed that by choosing $l=7$ and types $\s_0,\s_1,\s_2$ we have $\min_{H\in \M_7} (f(H)-c_H) = 1/27$. 
Then since $\sum_{H\in \M_l} p(H,G) = 1$, we know that if $f(H)-c_H>1/27$, then $p(H,G) = o(1)$.
\end{proof}

Notice that the Proposition~\ref{prop:forbiddenGraph} can be stated equivalently as follows.

\begin{prop}\label{prop:forbiddenGraphEps}
For every $\delta > 0$ there exists $n_0 = n_0(\delta)$ and $\eps' > 0$ such that 
for every admissible graph $G$ of order $n > n_0$ with $f(G) < 1/27 + \eps'$, 
if $H\in\M_7\setminus\L$, then $p(H,G) < \delta$.
Note that it is sufficent to pick
\[ \eps' < \delta \cdot \min_{H \in \M_7 \setminus \mathcal{L}}\{f(H)-c_H-1/27\}.\] 
\end{prop}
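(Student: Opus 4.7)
The plan is that Proposition~\ref{prop:forbiddenGraphEps} is a straightforward quantitative unpacking of the $o(1)$ notation that already appears implicitly in Proposition~\ref{prop:forbiddenGraph}. The idea is to re-examine the same inequality \eqref{eq:lowerBound} used to prove the qualitative version, but this time make the uniform positive gap between $\mathcal{L}$ and $\M_7\setminus\mathcal{L}$ explicit.

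First I would set $m := \min_{H \in \M_7 \setminus \mathcal{L}} \{f(H) - c_H - 1/27\}$. This is strictly positive: the minimum is taken over a finite set, and by definition of $\mathcal{L}$ every $H \in \M_7 \setminus \mathcal{L}$ satisfies $f(H) - c_H > 1/27$. Using $\sum_{H \in \M_7} p(H,G) = 1$ to subtract off the baseline $\frac{1}{27}$ inside \eqref{eq:lowerBound}, I would rewrite that inequality as
\[
f(G) + o(1) \ \ge\ \frac{1}{27} \ +\ \sum_{H \in \M_7 \setminus \mathcal{L}} \bigl( f(H) - c_H - \tfrac{1}{27}\bigr)\cdot p(H,G).
\]
Every summand on the right-hand side is nonnegative, so for any single graph $H_0 \in \M_7 \setminus \mathcal{L}$ I can discard all other terms and estimate the surviving coefficient by $m$, yielding
\[
f(G) - \frac{1}{27} + o(1) \ \ge\ m \cdot p(H_0, G).
\]

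Finally, given $\delta>0$, I would choose $\eps' < \delta m$, which is exactly the bound stated in the proposition, and then pick $n_0=n_0(\delta)$ large enough that the implicit $o(1)$ term above has absolute value at most $\delta m - \eps'$ for every $n \ge n_0$ (a single $n_0$ works since $\M_7 \setminus \mathcal{L}$ is finite). Under the hypothesis $f(G) < 1/27 + \eps'$, these choices give $m\cdot p(H_0,G) < \eps' + (\delta m - \eps') = \delta m$, hence $p(H_0,G) < \delta$, which is the conclusion of the proposition. There is no substantial obstacle here; the main point is just tracking the quantifier order so that $n_0$ depends on $\delta$ alone and works uniformly over the finite list of non-tight graphs.
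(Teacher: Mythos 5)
Your proposal is correct and matches the paper's approach: the paper treats this proposition as the quantitative restatement of Proposition~\ref{prop:forbiddenGraph}, obtained from the same inequality \eqref{eq:lowerBound} together with $\sum_H p(H,G)=1$ and the positive gap $m=\min_{H\in\M_7\setminus\L}\{f(H)-c_H-1/27\}$, which is exactly the bound $\eps'<\delta m$ indicated in the statement. Your explicit handling of the $o(1)$ term and the choice of $n_0(\delta)$ is just the spelled-out version of what the paper leaves implicit.
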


Proposition~\ref{prop:forbiddenGraphEps} will help us to get the
stability property of admissible graphs $G$ with $f(G)=\frac{1}{27}+o(1)$, which is discussed in the next section. 

\begin{figure}[htpd]
\begin{center}
\setstretch{3} 
\foreach \n in {1,...,8}{ \includegraphics[scale=0.7,page=\n]{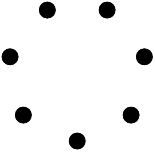}\hskip 1em}\\
\foreach \n in {9,...,51}{ \includegraphics[scale=0.7,page=\n]{fig-tight}\hskip 1em}
\end{center}
\caption{Graphs in $\L$. The first eight graphs are induced subgraphs of $T_3(n)$ or $\overline{T}_3(n)$.
 In order to save space, a depicted graph $H$ represents both $H$ and $\overline{H}$.}\label{fig-tight}
\end{figure}

\section{Stability Property}\label{sec:stability}
In this section we will prove the following stability type statement.
\bth\label{thm:stability}
For every $\eps > 0$ there exist $n_0$ and $\eps' > 0$ such that every admissible graph $G$ 
of order $n > n_0$ with $f(G) \leq \frac1{27}+\eps'$, 
is isomorphic to either $T_3(n)$ or $\OO{T_3(n)}$ after adding and/or deleting at most $20\eps n^2$ edges.
\eth

We will use our flag algebra results from Section~\ref{sec:FA} and the infinite removal lemma to prove Theorem~\ref{thm:stability}. 
The infinite removal lemma, proved by Alon and Shapira~\cite{Alon:2008}, is a substantial generalization of the induced removal lemma.

\blm[Infinite Removal Lemma~\cite{Alon:2008}]\label{lem:InfRem}
For any (possibly infinite) family $\HH$ of graphs and $\eps >0$, there exists $\delta>0$ such that 
if a graph $G$ on $n$ vertices contains at most $\delta n^{v(H)}$ induced copies of $H$ for every graph $H$ in $\HH$, 
then it is possible to make $G$ induced $H$-free, for every $H\in\HH$, by adding and/or deleting at most $\eps n^2$ edges.
\elm

\bpf[Proof of Theorem~\ref{thm:stability}]
Fix an $\eps > 0$.
Let $\delta$ be from Lemma~\ref{lem:InfRem}, when applied with $\eps$ and $\HH = (\M_7 \setminus \L) \cup \{ \textrm{not admissible graphs}  \}$.
Let $\eps'< \eps^4$ and $n_1$ be given by Proposition~\ref{prop:forbiddenGraphEps} such that $p(H,G) < \delta$ for every $H\in\M_7\setminus\L$
and $G$ on at least $n_1$ vertices. Let $n_0 > n_1$ such that  $f(G)>1/27-\eps'$ for all $G$ of order at least $n_0/2$.
Notice that every non-admissible graph $H$ satisfies $p(H,G) = 0$  for every admissible $G$.

Let $G$ be an admissible graph of order $n > n_0$ with $f(G) \leq \frac1{27}+\eps'$.
Now we apply Lemma~\ref{lem:InfRem} and conclude that by adding and/or deleting at most $\eps n^2$ edges,
every induced subgraph of $G$ on $7$ vertices belongs to $\L$ and $G$ is still admissible.

By direct inspection of graphs in $\L$, we have the following two properties of $G$. 
Notice that if all $7$-vertex induced subgraphs of $G$ satisfy these two properties, then so does $G$. 
Also notice that a graph $H$ satisfies these two properties if and only if $\OO H$ satisfies them.
\begin{description}
\item[Property A:] There are no $K_4$ and $\OO K_4$ that share a vertex. 
\item[Property B:] For every pair of $K_4$'s that share at least one vertex, the union of their vertex sets spans a clique. 
For every pair of $\OO K_4$'s that share at least one vertex, the union of their vertex sets spans an independent set.
\end{description}

Let $(G,x)$ be the $1$-flag where vertex $x$ is the labeled vertex, then 
$P(K_4^1,(G,x))$ is the number of $K_4$'s in $G$ that contain $x$. Define 
\begin{align*}
 F(x,G)=P(K_4^1,(G,x))+P(\OO K_4^1,(G,x))\text{and} f(x,G)=\left.F(x,G)\middle/\binom{v(G)-1}{3}\right. .
\end{align*}
Then we have $f(G) = \left(\sum_x f(x,G)\right)/v(G)$.
Let $G_0 = G$. 
For $i \geq 0$, let $x_i$ be the vertex with largest $f(x_i,G_i)$. 
If $f(x_i,G_i) > 1/27+2\eps'/\eps$, we create $G_{i+1}$ from $G_i$
by removing vertex $x_i$. If $f(x_i,G_i) \leq 1/27+2\eps'/\eps$, we
define $G' = G_i$ and $d=i$.
Note that $f(G_{i}) \leq f(G_{i-1})$, so $f(G_i)\le f(G)\le 1/27+\eps'$.
Also notice, that the process is not deterministic if there are
more candidates for $x_i$ for some $i$ (any choice of $x_i$ will work).
\begin{claim}
$d < \eps n$.
\end{claim}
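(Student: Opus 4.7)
The plan is to argue by contradiction. Suppose $d \geq \eps n$; I will show that after peeling off the first $\lceil \eps n\rceil$ vertices the density $f$ has dropped below $1/27 - \eps'$, which contradicts the defining property of $n_0$ (and hence Theorem~\ref{thm:asymG}) applied to the still admissible induced subgraph $G_{\lceil \eps n\rceil}$.

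First I would record the handshake identity $\sum_{x\in V(G)}F(x,G)=4F(G)$, which holds because every monochromatic $K_4$ or $\OO K_4$ contributes $1$ to exactly four of the summands (one per vertex). Normalising gives $f(G)=\frac{1}{v(G)}\sum_x f(x,G)$, so the maximiser $x_i$ always satisfies $f(x_i,G_i)\ge f(G_i)$. Next, combining $F(G_{i+1})=F(G_i)-F(x_i,G_i)$ with the Pascal identity $\binom{n_i}{4}=\binom{n_i-1}{4}+\binom{n_i-1}{3}$, where $n_i=n-i$, a one-line calculation yields the recursion
\[
f(G_{i+1}) = f(G_i) - \frac{4}{n_i-4}\bigl(f(x_i,G_i)-f(G_i)\bigr).
\]
In particular, $f(G_i)$ is non-increasing, so $f(G_i)\le f(G)\le 1/27+\eps'$ for every $i \le d$.

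While $i<d$, the algorithm guarantees $f(x_i,G_i)>1/27+2\eps'/\eps$, so the bracket in the recursion exceeds $2\eps'/\eps-\eps'$. We may assume $\eps<1/40$ (otherwise the conclusion of Theorem~\ref{thm:stability} is vacuous, since $\binom{n}{2}<20\eps n^2$), hence $2\eps'/\eps-\eps'\ge \eps'/\eps$ and
\[
f(G_i) - f(G_{i+1}) \ge \frac{4\eps'}{\eps(n_i-4)} \ge \frac{4\eps'}{\eps n}.
\]
Telescoping over $i=0,\dots,m-1$ with $m:=\lceil\eps n\rceil\le d$ gives $f(G_m) \le f(G) - 4\eps' \le 1/27 - 3\eps'$. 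On the other hand, $G_m$ is an admissible graph on $n-m\ge(1-\eps)n-1\ge n_0/2$ vertices, so by the choice of $n_0$ we have $f(G_m)>1/27-\eps'$, a contradiction. Therefore $d<\eps n$, as claimed. The only delicate calibration is the factor $2$ in the threshold $2\eps'/\eps$: it is precisely what makes the per-step drop of order $\eps'/(\eps n)$ accumulate over $\eps n$ steps to exceed the budget $2\eps'$ between $f(G)$ and the floor $1/27-\eps'$; no other genuine obstacle arises.
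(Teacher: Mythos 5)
Your proof is correct and follows essentially the same route as the paper: both bound the per-step drop $f(G_i)-f(G_{i+1})\ge 4\eps'/(\eps n)$ using the averaging identity $f(G)=\frac{1}{v(G)}\sum_x f(x,G)$ together with the thresholds $f(x_i,G_i)>1/27+2\eps'/\eps$ and $f(G_i)\le 1/27+\eps'$, then telescope over $\eps n$ deletions to push $f$ below $1/27-\eps'$, contradicting the choice of $n_0$ on the remaining admissible graph with at least $n/2$ vertices. The only difference is cosmetic (you express the one-step recursion with denominator $n_i-4$ against $f(G_i)$, the paper with denominator $v$ against $f(G_{i+1})$), so no further comparison is needed.
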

\bpf
Denote $v = v(G_{i-1})$ and $y$ the vertex deleted from $G_{i-1}$.
Then 
\begin{align}
f(G_{i-1}) - f(G_i) &\geq \frac{ 4\eps'}{\eps n},\label{eq:fGi}
\end{align}
which follows from the following computation
\begin{align*}
f(G_{i-1}) - f(G_i) &= \frac{\sum_x f(x,G_{i-1})}{v} - \frac{\sum_x f(x,G_{i})}{v-1} \\
                            &= \frac{f(y,G_{i-1}) + \sum_{x\neq y} f(x,G_{i-1})}{v} - \frac{\sum_x f(x,G_{i})}{v-1} \\
                            &= \frac{f(y,G_{i-1}) + (\sum_{x\neq y} F(x,G_{i-1}))/\binom{v-1}{3}}{v} - \frac{\sum_x f(x,G_{i})}{v-1} \\
                            &= \frac{4f(y,G_{i-1}) + \binom{v-2}{3}(\sum_{x} F(x,G_i))/(\binom{v-2}{3}\binom{v-1}{3})}{v} - \frac{\sum_x f(x,G_{i})}{v-1} \\
                            &= \frac{4f(y,G_{i-1}) + \frac{v-4}{v-1}\sum_{x} f(x,G_i)}{v} - \frac{\sum_x f(x,G_{i})}{v-1} \\
                            &= \frac{4(v-1)f(y,G_{i-1}) + (v-4)\sum_x f(x,G_{i}) -  v\sum_x f(x,G_{i})  }{v(v-1)}\\
                            &= \frac{4f(y,G_{i-1}) -  4f(G_i)}{v}\\
                            &\geq \frac{4\left( 2\eps'/\eps - \eps'\right)}{n} 
                            \geq \frac{ 4\eps'}{\eps n}.
\end{align*}

If follows from $n \geq n_0$ that  $f(H) > 1/27-\eps'$ for every admissible graph $H$ on at least $n/2$ vertices.
However, if $d > \eps n$, then for $i = \eps n$, $f(G_i) < 1/27+\eps' - 4i\eps'/\eps n < 1/27- \eps'$, which is a contradiction
since $G_i$ has at least $n-\eps n = (1-\eps)n \geq n/2$ vertices.
\epf

\begin{claim}
The number of vertices $x$ with $f(x,G') < \frac1{27} - \eps$ is at most $\eps v'$, where $v' = v(G')$.
\end{claim}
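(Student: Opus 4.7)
The plan is a short averaging argument leveraging two bounds. On one hand, by the stopping condition of the trimming process, every vertex $x \in V(G')$ satisfies $f(x,G') \le \frac{1}{27} + \frac{2\eps'}{\eps}$, since otherwise $x$ would have been chosen as some $x_i$ and deleted. On the other hand, because $v' \ge (1-\eps)n \ge n_0/2$ (we may assume $\eps \le 1/2$), Theorem~\ref{thm:asymG} applied to $G'$, combined with taking $n_0$ large enough to absorb the $o(1)$ term, gives $f(G') \ge \frac{1}{27} - \eps'$. This lower bound was essentially already used when proving the bound on $d$, so no new ingredient is needed.

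Suppose for contradiction that the set $B$ of vertices with $f(x,G') < \frac{1}{27} - \eps$ satisfies $|B| > \eps v'$. Splitting the sum $\sum_x f(x,G')$ over $B$ and $V(G') \setminus B$, and applying the two bounds above,
\[
f(G') \;=\; \frac{1}{v'}\sum_{x} f(x,G') \;<\; \frac{|B|}{v'}\Big(\tfrac{1}{27}-\eps\Big) + \frac{v'-|B|}{v'}\Big(\tfrac{1}{27}+\tfrac{2\eps'}{\eps}\Big) \;<\; \frac{1}{27} - \eps \cdot \frac{|B|}{v'} + \frac{2\eps'}{\eps} \;<\; \frac{1}{27} - \eps^2 + \frac{2\eps'}{\eps}.
\]
Because the outer proof of Theorem~\ref{thm:stability} fixed $\eps' \le \eps^4$, we have $\eps^2 - 2\eps'/\eps \ge \eps^2 - 2\eps^3 > \eps'$ for small enough $\eps$, so the right-hand side is strictly below $\frac{1}{27} - \eps'$, contradicting the lower bound on $f(G')$.

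No substantial obstacle is anticipated; the claim follows purely from the definition of $G'$ (upper bound per vertex) together with the flag-algebra lower bound of Theorem~\ref{thm:asymG} (lower bound on the average). The only point to check is consistency of the parameters chosen at the start of the proof of Theorem~\ref{thm:stability}, namely that $\eps'$ is small enough relative to $\eps$ (the choice $\eps' \le \eps^4$ is comfortably sufficient) and that $n_0$ is large enough to guarantee $f(H) \ge \frac{1}{27}-\eps'$ for every admissible graph $H$ on at least $n_0/2$ vertices; both are arranged at the outset.
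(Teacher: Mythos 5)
Your proposal is correct and follows essentially the same argument as the paper: split $\sum_x f(x,G')$ according to whether $f(x,G') < \tfrac{1}{27}-\eps$, use the stopping condition $f(x,G') \le \tfrac{1}{27}+\tfrac{2\eps'}{\eps}$ for the rest, and derive $f(G') < \tfrac{1}{27}-\eps'$ (contradicting the lower bound guaranteed by the choice of $n_0$) whenever the exceptional set exceeds $\eps v'$, using $\eps' < \eps^4$. Your write-up just makes explicit the lower bound on $f(G')$ that the paper invokes implicitly.
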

\bpf
Let the number of vertices with $f(x,G') < \frac1{27}-\eps$ be $z$. 
\begin{align*}
v'f(G')& = \sum_x f(x,G')  
       < z\left(\frac1{27} - \eps\right) + (v'-z)\left(\frac1{27} + \frac{2\eps'}{\eps}\right)\\
       &= -z\eps + v'\frac1{27} + v'\frac{2\eps'}{\eps} - z\frac{2\eps'}{\eps}        
       < \frac{v'}{27} + \frac{2v'\eps'}{\eps} - \eps z.   
\end{align*}
If $z > \eps v'$, then we get
\[
f(G') < \frac{1}{27} + \frac{2\eps'}{\eps} - \eps^2   < \frac{1}{27} - \eps',
\]
which is a contradiction (recall that $\eps' < \eps^4$).
\epf

Let $G''$ be the graph obtained from $G'$ by removing all such vertices. We removed at most $\eps v'$ vertices, so 
\[ F(x,G') - F(x,G'') < \left. \eps v'\binom{v'-2}{2}  \right. \] 
for each vertex $x\in V(G'')$. 
Denote $v(G'')$ by $v''$. We have $v''\ge (1-\eps)v'$ and 
\begin{align*}
F(x,G'')&> F(x,G') - \eps v' \binom{v'-2}{2} \\
& \ge \left( \frac{1}{27}-\eps\right)\binom{v'-1}{3} - \eps v'\binom{v'-2}{2}\\
& \ge \left( \frac{1}{27}-\eps\right)\frac{(v'-1)(v'-2)(v'-3)}{6} - \eps \frac{(v'-1)(v'-2)(v'-3)}{1.5}\\
& \ge \left( \frac{1}{27}-5\eps\right) \binom{v''-1}{3}.
\end{align*}

We know that $f(x,G')\le 1/27+2\eps'/\eps$. Then since $\eps'<\eps^4$
and $v''\ge (1-\eps)v'$, we have
\[
F(x,G'')\le F(x,G')\le \left( \frac{1}{27}+\frac{2\eps'}{\eps}\right) \binom{v'-1}{3} \le \left( \frac{1}{27}+5\eps\right) \binom{v''-1}{3}.
\]

This means that for every vertex $x\in V(G'')$, we have 
\begin{equation}\label{eq:reg}
 \left(\frac1{27}-5\eps\right)\binom{v''-1}{3}<F(x,G'')<\left(\frac1{27}+5\eps\right)\binom{v''-1}{3}.
\end{equation}

For $x,y\in V(G'')$, write $x\sim y$ if $x=y$ or there is a chain of vertex-intersecting $K_4$'s or $\OO K_4$'s connecting $x$ to $y$. 
Clearly, $\sim$ is an equivalence relation, and by Property A, each chain consists of cliques only or independent sets only. 
By Property B, each $\sim$-equivalence class is a clique or an independent set. 
Let $s$ be the size of the class of $x$. This means that $F(x,G'') = \binom{s-1}{3}$. It follows from~\eqref{eq:reg} that
\beq\label{eq:size}
\left(\frac{1}{3}-16\eps\right)v''<s<\left(\frac{1}{3}+16\eps\right)v'',
\eeq
which means each $\sim$-equivalence class has size at least $(1/3-16\eps)v''$ and at most $(1/3+16\eps)v''$.

Next, we claim that equivalence classes are all cliques or all independent sets. Suppose on the
contrary that $G''[A]$ is a clique and $G''[B]$ is an independent set. W.l.o.g., assume that the edge density between $A$
and $B$ is at least $1/2$. 
Then there exists a vertex $x$ in $B$ such that $|\Gamma(x)\cap A|\ge |A|/2$.
Taking a $4$-set $X\subset B$ containing $x$
and a $3$-set $Y\subset \Gamma(x)\cap A$,  
then $G''[X] = \OO K_4$ and $G''[Y\cup\{x\}] = K_4$. We find a $K_4$ and a $\OO K_4$ that share a vertex $x$,
contradicting Property A.

W.l.o.g., assume that each equivalence class is an independent set. It follows from~\eqref{eq:size} that there are exactly three equivalence classes. 
Denote them by $A_1,A_2$ and $A_3$. 
If there exist an $x \in A_i$ and $y_1,y_2,y_3 \in A_j$ ($i \neq j$) such that none of $xy_k$ is an edge, 
then $x \sim y_k$, which would contradict Property B. 
This means that all but at most $4v''$ of edges between equivalence classes are in $G''$. 
To get ${T_3(v'')}$, we need to add these edges and balance the three sets. In this step we change at most $4n+16\eps n^2$ edges.

We first change at most $\eps n^2$ edges of $G$ such that $G$ does not contain any $H\in \M_7\setminus \L$, then we remove at most $2\eps n$ vertices to form $G''$. Then we change at most $4n+16\eps n^2$ edges to get ${T_3(v'')}$. Therefore, to get ${T_3(n)}$ from $G$, we only need to change at most $\eps n^2 + 2\eps n^2 + 4n+16\eps n^2 \le 20\eps n^2$ edges, as required.
\epf

\section{Exact Result}\label{sec:exact}
We call $u\in V(G)$ a \emph{clone} of $v\in V(G)$ if $\Gamma(u) = \Gamma(v)$. In particular, $uv$ is not an edge of $G$.

\bpp\label{prop:clone}
Let $G$ be an admissible graph of order $n$. If we add a clone $x'$ of some $x\in V(G)$ to form a new graph $G'$ of order $n+1$,
i.e., $\Gamma_{G'}(x') = \Gamma_G(x)$, then $G'$ is still admissible. 
\epp
\bpf The graph $G$ comes from some permutation $\tau$ on $[n]$. Let $k$ be the number in $[n]$ that corresponds to $x$, 
then we can construct a new permutation $\tau'$ on $[n+1]$ as follows:
\begin{displaymath}
\tau'(i) = \left\{ \begin{array}{ll}
\tau(i) & \textrm{if $i\le k$ and $\tau(i)\le \tau(k)$}\\
\tau(i)+1 &\textrm{if $i\le k$ and $\tau(i)> \tau(k)$}\\
\tau(k)+1 & \textrm{if $i=k+1$}\\ 
\tau(i-1) & \textrm{if $i> k$ and $\tau(i-1)< \tau(k)$}\\
\tau(i-1)+1 & \textrm{if $i> k$ and $\tau(i-1)\ge \tau(k)$.}
\end{array} \right. \end{displaymath}
The {\pg} of $\tau'$ is $G'$ with $k+1$ corresponding to the new vertex $x'$.
\epf

Let $S$ be the 7-vertex graph obtained by gluing three paths $xy_iz_i$, $i=1,2,3$, at the common vertex $x$, see Figure~\ref{fig-S}.

\begin{figure}[htpd]
\begin{center}\includegraphics{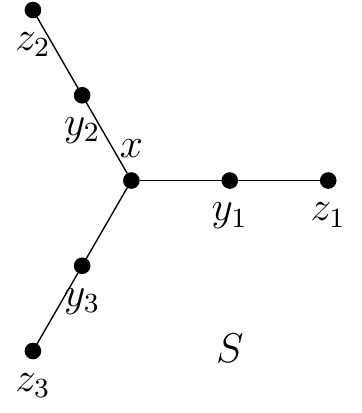} \end{center}
\caption{The graph S.}\label{fig-S}
\end{figure}

\bpp\label{prop:S} The graph $S$ is not admissible.
\epp
\bpf
Admissible graphs can be alternatively defined as intersection graphs of segments
whose endpoints lie on two parallel lines. For a vertex $v$ in $S$ denote by $s(v)$ the segment
representing $v$. Since $y_1,y_2$ and $y_3$ form an independent
set, segments representing them do not intersect. On the other hand $s(x)$ intersects all of them.
W.l.o.g., assume that $s(y_2)$ is middle of the three segments in the order they intersect $s(x)$,
see Figure~\ref{fig-segments}.
Since $z_2$ is adjacent only to $y_2$, the segment representing $z_2$ intersects only $s(y_2)$, which is clearly impossible.

\begin{figure}[htpd]
\begin{center}\includegraphics{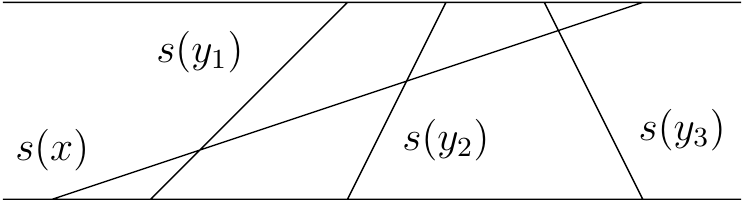} \end{center}
\caption{Representation of part of $S$ as intersecting segments.}\label{fig-segments}
\end{figure}

Alternatively, it is possible to check all admissible graphs on $7$ vertices, i.e., $\M_7$, and conclude that $S$ is not among them.
\epf

\begin{proof}[Proof of Theorem~\ref{thm:exactG}]
Let $G$ be an admissible graph of order $n$ with minimum $F$ among all admissible graphs on $n$ vertices, where $n$ is sufficiently large.
Fix $\eps > 0$  sufficiently small.
In particular, by Theorem~\ref{thm:asymG}, we assume $n$ large enough such that $f(G) \leq \frac{1}{27} +\eps$.
Let $V=V(G)$.

By Theorem~\ref{thm:stability}, we also assume that we can make $G$ equal to $T_3(n)$ by adding/or deleting at most 
$\eps n^2$ edges (also large $n$ needed).
Take a complete 3-partite graph $T$ on $V$ such that $|W|$ is minimized, where $W=E(T)\bigtriangleup E(G)$.
From Theorem~\ref{thm:stability} we know that 
$|W| < \eps n^2$ 
and $V$ can be partitioned into $V_1,V_2,V_3$ of sizes $(1/3-\eps)n < |V_i| < (1/3+\eps)n$,
which are the parts of $T$. Let $B=E(G)\setminus E(T)$
and $M=E(T)\setminus E(G)$.
We call edges in $B$ \emph{bad}, in $M$ \emph{missing}, and in $W$ \emph{wrong}.

\bpp\label{prop:32eps} For any $x\in V$ we have $f(x,G) \leq 1/27 + \frac{3}{2}\eps$.
\epp
\bpf 
First we prove that for any $x,y\in V$ we have 
\begin{align}
|F(x,G)-F(y,G)|\le \binom{n-2}{2}. \label{eq:FxFy}
\end{align}
W.l.o.g., assume $F(x,G) \geq F(y,G)$.
Let $G'$ be obtained from  $G$ by adding a clone of $y$ and removing $x$.
By Proposition~\ref{prop:clone}, $G'$ is an admissible graph.
By the extremality of $G$ we have
\[
0 \leq F(G')-F(G)\le F(y,G)-F(x,G)+\binom{n-2}{2},
\]
which gives \eqref{eq:FxFy}.

Recall, that $F(G)=\frac14\sum_{x\in V} F(x,G)\le F(T_3(n))$.
Suppose that there exists an $x$ such that $f(x,G) > 1/27 + \frac{3}{2}\eps$, i.e.
\begin{align*}
F(x,G) &> \left(\frac{1}{27} + \frac{3}{2}\eps \right)\binom{n-1}{3}.
\end{align*}
By using \eqref{eq:FxFy}  we have that for every $y\in V$
\begin{align*}
F(y,G) > \left(\frac{1}{27} + \frac{3}{2}\eps \right)\binom{n-1}{3} - \binom{n-2}{2} > \left(\frac{1}{27} + \eps \right)\binom{n-1}{3}.
\end{align*}
Hence $f(G) > \frac{1}{27} + \eps$ which contradicts our assumption that $f(G) \le \frac{1}{27}+\eps$.
\epf

\blm\label{lem:W} 
The graph $W$, and thus also $B$ and $M$, 
has maximum degree less than $\eta n$, where $\eta = 2\eps^{1/18}$. 
\elm
\bpf Let $x$ be a vertex in $V$. Let $\alpha_i=|\Gamma(x)\cap V_i|/|V_i|$ where $V_1,V_2,V_3$ are the parts of $T$. 
Let $\delta = 2\eps^{1/6}$.
If every $\alpha_i \in (\delta, 1-\delta)$
then there are at least $\delta^6 (\frac{1}{3}-\eps)^6n^6$ ways to choose a set 
$U = \{y_1,y_2,y_3,z_1,z_2,z_3\}$ with $y_i\in V_i\setminus \Gamma(x)-x$ and $z_i\in V_i\cap \Gamma(x)-x$. 
The number of such sets $U$ which contain a wrong edge is at most $\binom{n}{4}|W|  <  \eps n^6/24$.
Since $\delta^6 (\frac{1}{3}-\eps)^6n^6 > \eps n^6/24$, there exists
$U$ that does not contain any edge of $W$, which means 
$U\cup\{x\}$ induces the complement of the non-admissible graph $S$ in $G$, 
a contradiction to Proposition~\ref{prop:S}.

W.l.o.g., we may assume that $\alpha_1<\delta$ or $\alpha_1 > 1-\delta$.

If $\alpha_1<\delta$, then the number of copies of $\OO K_4$ via $x$ whose other three vertices are in $A_1$ is at least
\begin{align*}
\binom{(1-\delta)(1/3-\eps)n}{3} \geq \left(1-\delta\right)^3\left(\frac{1}{3}-\eps\right)^3\binom{n}{3}\geq \left(\frac{1}{27}-\eps-\delta\right)\binom{n}{3}.
\end{align*}

Thus $x$ has to be connected to almost every vertex in $A_2\cup A_3$. 
To be more precise, assume $\alpha_i \le 1- \eta$ for for $i=1$ or $i=2$.
Then we should have
\beq\label{eq:eta}
\binom{\eta(1/3-\eps)n}{3}\le\binom{(1-\alpha_i)|V_i|}{3} < \left( \frac{1}{27}+\eps\right)\binom{n}{3} - \left(\frac{1}{27}-\eps-\delta\right)\binom{n}{3}.
\eeq
However, \eqref{eq:eta} does not hold since $\eta = 2\eps^{1/18}$, so we know that $\alpha_2 > 1- \eta$ and $\alpha_3 > 1-\eta$.

If $\alpha_1 > 1 - \delta$, then 
\begin{align*}
f(x,G) &\ge\left. \left( \sum_{i=1}^3\binom{(1-\alpha_i)|V_i|}{3}  +\alpha_1\alpha_2\alpha_3|V_1||V_2||V_3|-\eps n^3\right)\middle/\binom{n}{3}\right.\\
&\ge\left. \left(  \binom{(1-\alpha_2)|V_2|}{3} + \binom{(1-\alpha_3)|V_3|}{3}  +(1-\delta)\alpha_2\alpha_3|V_1||V_2||V_3|-\eps n^3 \right)\middle/\binom{n}{3}\right.\\
&= (1/3-\eps)^3\left( (1-\alpha_2)^3 +  (1-\alpha_3)^3 +  6(1-\delta)\alpha_2\alpha_3  \right) - 6\eps\\
&\ge (1/3-\eps)^3\left( (1-\alpha_2)^3 +  (1-\alpha_3)^3 +  5\alpha_2\alpha_3  \right)-6\eps.
\end{align*}

Let $h(x,y) =  (1-x)^3 +  (1-y)^3 +  5xy$.
The minimum value of the polynomial $h(x,y)$ on $[0,1]^2$ is $1$ with equality if and only if $\{x,y\}=\{0,1\}$. 
We know that $f(x,G)\le 1/27+\frac{3}{2}\eps$.
Then by the continuity of $h$ and the compactness of $[0,1]^2$, $\{\alpha_2,\alpha_3\}$ is close to $\{0,1\}$.
W.l.o.g., assume $\alpha_2$ is close to $1$ and $\alpha_3$ is close to $0$. Let $\gamma = 6\eps^{1/3}$. If $\alpha_2\le 1-\gamma$ or $\alpha_3\ge\gamma$, then
\begin{align*}
f(x,G) &\ge  (1/3-\eps)^3\left( (1-\alpha_2)^3 +  (1-\alpha_3)^3 +  5\alpha_2\alpha_3  \right)-6\eps.\\
&= (1/3-\eps)^3 (1+(2 - 5(1-\alpha_2))\alpha_3 + 3\alpha_3^2-\alpha_3^3 + (1-\alpha_2)^3) - 6\eps \\
&\ge (1/3-\eps)^3 (1+\alpha_3 + (1-\alpha_2)^3)-6\eps\\
&> \frac{1}{27}+\frac{3}{2}\eps,
\end{align*}
which is a contradiction with Proposition~\ref{prop:32eps}. So $\alpha_2 > 1-\gamma$ and $\alpha_3< \gamma$.

Note that $\eta>\delta >\gamma$,
so now we know that two of $\alpha_1, \alpha_2, \alpha_3$ are at least $1-\eta$ and the other one is less than $\eta$.
W.l.o.g., we may assume $\alpha_1 < \eta, \alpha_2 > 1 -\eta$ and $\alpha_3 > 1 - \eta$. 
Then we know that $x\in V_1$ since otherwise we can move $x$ to $V_1$ and 
decrease $|W|$ which is a contradiction to the choice of $T$. Thus $d_W(x) < \eta n = 2\eps^{1/18} n$. 
\epf

It follows from Lemma~\ref{lem:W} that every bad edge $xy\in B$ belongs to at least $(1/9-\eta)n^2$ copies of $K_4$,
because
     \begin{align*}
     \left(\frac{1}{3}-\eps\right)^2n^2 - 2\eta n \left(\frac{1}{3}+\eps\right)n 
     = \left(\frac{1}{9} - \frac{2}{3}\eps + \eps^2 - \frac{2\eta}{3} - 2\eta\eps \right)n^2 
      \geq \left(\frac{1}{9} - \eta\right)n^2.
    \end{align*}

On the other hand, if we remove $xy$ from $E(G)$, this would create at most $(1/18+\eta^2)n^2$ copies of $\OO K_4$,
because
   \begin{align*}
   \binom{\left(1/3+\eps\right)n}{2} + \binom{\eta n}{2} 
   \leq \left(\frac{1}{18} + \frac{\eps}{3} +\frac{\eps^2}{2} + \frac{\eta^2}{2}\right)n^2 
   \leq \left(\frac{1}{18} + \eta^2 \right)n^2.
   \end{align*}
Also, by $\Delta(B)< \eta n$ and $b=|B| < \eta n^2$,
the number of $4$-sets that contain at least two bad edges is at most
   \begin{align*}
   \binom{b}{2} + 2b(\eta n)n \leq \frac{b^2}{2} + 2\eta bn^2 < \frac{\eta b n^2}{2} +  2\eta bn^2 < 3\eta bn^2.
   \end{align*}
Thus if $G'$ is obtained from $G$ by removing all bad edges of $G$, it satisfies $F(G')-F(G) \leq -bn^2/18+\eps b n^2<0$ unless $b=0$,
because 
   \begin{align*}
    F(G')-F(G) &\leq \left(\frac{1}{18}+\eta^2\right)bn^2  - \left(\left(\frac{1}{9}-\eta\right)bn^2 - 6 \cdot 3\eta bn^2\right) \\
    & \leq \left( -\frac{1}{18} + \eta^2 +19\eta\right)bn^2
     \leq -\frac{bn^2}{100}.
   \end{align*}
   
Clearly, the complete $3$-partite graph $T$ can be obtained from $G'$ by adding all missing edges between parts. Thus we have
$P(\OO K_4,T)\le P(\OO K_4,G')$. Then since $P(K_4,T)=0\le P(K_4,G')$, the admissible graph $T$
satisfies $F(T)\le F(G')<F(G)$ unless $b=0$. By the choice of $G$, we have $F(G)\le F(T)$, so $b=0$, which means $G$ is a subgraph of $T$ and $G$ is a $3$-partite graph. Then since $F(G)\le F(H)$ for every $H\in\M_n$, we know that $G$ is a subgraph of $T_3(n)$ and $F(G) = F(T_3(n))$.
\end{proof}
 
\section{Conclusion}
In~Theorem~\ref{thm:exact}, we verified Conjecture~\ref{con:exact} for $k=3$ and $n$
sufficiently large, and we fully characterized the set of the extremal
configurations. While revising our paper, we discovered that using a slightly
refined set-up of the flag algebra framework, we can also prove an analogue
of Theorem~\ref{thm:asym} for $k=4$ and $k=5$. We address these two cases
in a forthcoming note.

 \section*{Acknowledgement}
 We would like to thank Dan Kr\'a\soft{l} for fruitful discussions and developing the first version of the program
 implementing flag algebras over permutations. We would also like to thank Andrew Treglown for fruitful discussions.
 We also thank the anonymous referees for carefully reading the manuscript and for their valuable comments, which
 greatly improved the presentation of the results.

\bibliographystyle{abbrv}
\bibliography{refs.bib}

\clearpage
\begin{appendix}
\section{Rounding approximate solutions to exact solutions}
Recall from the proof of Theorem~\ref{thm:asymG} that a solution consists of several positive semidefinite matrices. For example, in our problem, our solution consists of three matrices $Q_0,Q_1$ and $Q_2$.
For simplicity, when describing the rounding procedure, we assume that there is only one matrix.
A computer solver can only solve a semidefinite program numerically  and thus we get an approximate solution. 
Let $Q$ be a $t$-by-$t$ matrix computed by a computer solver.
To make the solution exact, we need to convert entries in the matrix to rational numbers.
A resulting \emph{rounded} matrix $Q'$ must satisfy the following.
\begin{description}
\item[Goal $1$.] $Q'$ is positive semidefinite.
\item[Goal $2$.] $Q'$ gives us the desired number, i.e., see \eqref{eq:27}.
\end{description}

The idea of the rounding is following. For most of entries in $Q'$ we use a rational number close to the corresponding entry in $Q$.
The other entries in $Q'$ will be computed such that $Q'$ satisfies Goals $1$ and $2$.

We will construct a system of linear equations whose variables are entries of $Q'$ (ignoring the entries below the main diagonals) 
and all constants are rational numbers. There are two types of linear equations in the system, Type $1$ and Type $2$, 
which make our solution achieve Goal $1$ and Goal $2$ respectively. 
We again use computer solver to solve the linear system, but unlike a semidefinite program, 
a system of linear equations can be solved over rationals.

When we use an entry from $Q$, it is sufficient in our case if the corresponding entry in $Q'$ differs by at most $\eps = 10^{-5}$.

To achieve Goal $1$, we want all eigenvalues to be non-negative. 
We know that $Q$ is positive semidefinite, so all its eigenvalues are non-negative. 
If an eigenvalue of $Q$ is a large positive number compared to $\eps$, 
then  we expect it to be still positive after rounding, since as we mentioned above, entries of $Q$ are perturbed just a little bit.
But if an eigenvalue of $Q$ is small, for example, $10^{-6}$, 
then it may become $-10^{-8}$ after rounding and $Q'$ would not be positive semidefinite.
To avoid this, we force such eigenvalues to become $0$ after rounding. 
We do this by adding a constraint to our linear system for every such eigenvalue. 
Let $\{X_i\}$ be the set of eigenvectors of $Q$ whose eigenvalues are smaller than $\eps_1$ for some $\eps_1 > 0$.
We assume that $X_i$ is close to an eigenvector of $Q'$ with eigenvalue $0$.
So we find an approximate basis $\{X'_i\}$ of the linear space generated by $\{X_i\}$, and add $Q'X'_i = 0$ to our linear system.
These are Type $1$ linear equations.
Note that entries of $Q'$ are variables, so this  gives us $t$ linear equations for each $X'_i$. 
Let $X_i = [x_{i,1},\ldots,x_{i,t}]$. The algorithm of finding $X'_i$ is outlined below, which is taken from Baber's Thesis~\cite{Baber}:
\begin{tabbing}
Fo\=r each $X_i$:\\
\> Let $\ell$ be  $\operatorname{arg\,max}_j |x_{i,j}|$.\\
\> Set $X_i = X_i/x_{i,\ell}$.\\
\> Fo\=r all $k\ne i:$\\
\>\>Set $X_k = X_k - x_{k,\ell}\cdot X_i$.\\
Guess $X'_i$ from $X_i$ by assuming that all entries of $X'_i$ are rational numbers.
\end{tabbing}

More details of the algorithm are in Section 2.4.2.2 of~\cite{Baber}. The last step of the algorithm means that $X_i$ should look good and one can see instantly from $X_i$ what the exact value is. For example, if one sees $0.33333332$, then $1/3$ should be guessed.

To achieve Goal $2$, we check values of $f(H)-c_H$ for all $H$ in $\M_l$.
If $f(H)-c_H$ is much larger than $1/27$, we hope that it will be still larger than $1/27$ after rounding.
However, if $f(H)-c_H$ is close to $1/27$, a small change on entries of $Q$ could result in $f(H)-c_H$ being less than $1/27$, which violates Goal $2$.
To prevent this, we add a linear equation $f(H)-c_H = 1/27$ for every $H \in \M_l$ if $f(H)-c_H < 1/27 + \eps_2$ for some $\eps_2 > 0$.
These are Type $2$ linear equations.

The system of $k$ Type 1 and Type 2  linear equations can be written as
$$ 
A y = b,
$$
where $y=\{y_1,\ldots,y_m\}$ corresponds to entries of $Q'$, $A \in \mathbb{Q}^{k\times m}$, and $b \in \mathbb{Q}^k$.
Usually, $m$ is larger than $r = rank(A)$. 
W.l.o.g., assume that the first $r$ columns of $A$ are linearly independent.
Then  $A$ can be written as  $\begin{bmatrix} A' & A''\end{bmatrix}$ where $A'$ is the first $r$ columns of $A$ and $A''$ is the rest of the columns of $A$. 
Let $y' = \{y_1,\ldots,y_{r}\}$ and $y'' = \{y_{r+1},\ldots,y_m\}$.  
We assign to $y_i$ in $y''$ a rational number, such that $|y_i - x_i| < \eps_3$, where $x_i \in Q$ corresponds to $y_i$ and $\eps_3 > 0$.
This step can be done arbitrarily. For example, let $\eps_3 = 10^{-5}$ and keep the first $5$ digits of $x_i$. 
Then we have the following matrix equation:
\begin{equation}\label{eq:round}
A' y' = b - A''y''.
\end{equation}
Note that the number of equations in~\eqref{eq:round} may be larger than $r$. So this system may have no solution. But if it has
a solution, then this solution is unique,
which gives a matrix over rational numbers.
Then we need to verify if this matrix satisfies Goals $1$ and $2$. If yes, we get $Q'$.
If not, we can try to redo the computation with a smaller $\eps_3$, or look which of the goals is violated and enlarge $\eps_1$ or $\eps_2$ to add more equations to the linear system.

If we are unlucky that the linear system~\eqref{eq:round} has no solution, then it means we added too many equations. Note that we pick eigenvalues that are smaller than $\eps_1$ and add corresponding Type $1$ equations, and pick $H$ with $f(H)-c_H<1/27+\eps_2$ and add corresponding Type $2$ equations. 
In order to have fewer equations,
we may re-pick Type $1$ and Type $2$ equations with smaller $\eps_1$ and $\eps_2$.

So far in our rounding procedure, we get Type $1$ and Type $2$ equations only from $Q$.
If the attacked problem has conjectured extremal structures,
we can also get Type $1$ and Type $2$ equations from those structures.

Take our problem for example.
Let $G$ be an extremal graph on $n$ vertices. By Proposition~\ref{prop:forbiddenGraph}, if $p(H,G)>o(1)$, then $f(H)-c_H = 1/27$, which gives Type $2$ equations. For our problem, this gives the first eight graphs in Figure~\ref{fig-tight}.
Unsurprisingly, every $H$ of these eight graphs satisfy $f(H)-c_H\approx 1/27$ in  $Q$. 
So these Type $2$ equations are usually generated from $Q$ by the process described before.

For Type $1$ equations, using~\eqref{eq:lowerBound}, we have
\begin{align*}
1/27+o(1) = f(G) + o(1) &\ge \sum_{H\in\M_7} (f(H)-c_H)\cdot p(H,G)
\end{align*}
and
\begin{align*}
f(G) - \sum_{H\in\M_7}c_Hp(H,G)  = \sum_{H\in\M_7} (f(H)-c_H)\cdot p(H,G)  \ge \min_{H\in\M_7} (f(H)-c_H) = 1/27.
\end{align*}
This gives $\sum_{H\in\M_7}c_Hp(H,G) = o(1)$.
Recall from Section~\ref{sec:FA} that 
we use $(\s_i,X_i, Q_i)$ to get $c_H$. Denote $X_i = \{F^i_j\}$. For $\theta\in\Theta(|\s_i|,G)$, let $Y_{i,\theta}$ be the vector whose entries are $p(F^i_j,(G,\theta))$. It follows from \eqref{eq:Q} and the definition of $c_H$ that

\beq\label{eq:egvector}
o(1) = \sum_{H\in\M_7}c_Hp(H,G) = \sum_i E_{\theta\in \Theta(|\s_i|,G)} Y_{i,\theta}^T\cdot Q_i \cdot Y_{i,\theta}.
\eeq

For each $\theta\in\Theta(|\s_i|,G)$ we have a vector $Y_{i,\theta}$, 
but if the conjectured extremal structures are symmetric in some sense, then there may be only $C$ different $Y_{i,\theta}$'s where $C$ is a constant independent of $n$.
Choose $\theta$ from $\Theta(|\s_i|,G)$ uniformly at random. If $\P[Y_{i,\theta} = Y_{i,\phi}]>o(1)$ for some $\phi \in\Theta(|\s_i|,G)$, then we have $Y_{i,\phi}^T\cdot Q_i\cdot Y_{i,\phi} = 0$, otherwise we do not have~\eqref{eq:egvector}. 
Since $Q_i\succeq0$, this means that $Y_{i,\phi}$ is an eigenvector of $Q_i$ with eigenvalue $0$,
giving us Type $1$ equations. 
In our problem, the vectors $\{Y_{i,\phi}\}$ we get from conjectured extremal structures are in the space generated by $\{X'_i\}$. So there is no need to combine equations generated from these two methods.
Let us mention that, for our problem, we could not guarantee that the rounded matrix is positive definite
by just using Type $1$ equations that come from the Tur\'an graph. We also
needed Type $1$ equations from the numerical solution.

\end{appendix}

\end{document}